\newtheorem{theorem}{Theorem}
\theoremstyle{plain}
\newtheorem{proposition}{Proposition}
\numberwithin{equation}{section}
\begin{document}
\title{Degenerations and Contractions of Algebras and Forms }
\author{Harold N. Ward}
\date{May 2023}
\address{Department of Mathematics\\
University of Virginia\\
Charlottesville, VA 22904\\
USA}
\email{hnw@virginia.edu}
\subjclass[2020]{Primary: 16H99; Secondary: 11E04 14D06}
\keywords{algebra, quadratic form, degeneration, contraction}

\begin{abstract}
This note intertwines the concepts of degeneration and contraction of
algebras and quadratic forms defined on a vector space $V$. The general
linear group $\mathrm{GL}(V)$ acts regularly on the spaces of these two
objects. The base field is taken to be infinite of characteristic not 2. It
is unrestricted otherwise, as in the first cited paper of Ivanova and
Pallikaros. We apply the results to algebras and forms in dimensions two and
three.
\end{abstract}

\maketitle

\section{Introduction}

The framework for what follows involves an infinite field $\mathbb{F}$ and a
vector space $V$ over $\mathbb{F}$ with $\dim V=n$. Let $\mathrm{G}$ stand
for the general linear group $\mathrm{GL}(V)$. The members of $\mathrm{G}$
will usually be given by $n\times n$ nonsingular matrices having
coefficients in $\mathbb{F}$ relative to some basis in $V$. These will act
by left multiplication on column vectors representing the members of $V$,
and $\mathbf{g}\in \mathrm{G}$ is usually taken to mean its matrix. Let $%
\Lambda $ be a second finite-dimensional vector space over $\mathbb{F}$ upon
which $\mathrm{G}$ acts by linear transformations whose matrices relative to
a given basis of $\Lambda $ have entries in the polynomial ring $\mathbb{F}%
[g_{11},g_{12},\ldots ,g_{nn},1/d]$. Here the $g_{ij}$ are the matrix
entries of $\mathbf{g}$ and $d=\det \mathbf{g}$. We refer to this action as
the \textbf{companion} action on $\Lambda $, and write it as $\Lambda (%
\mathbf{g})$. The image of $\mathbf{\lambda }$ under $\mathbf{g}$ will be
written as $\mathbf{\lambda g}$, this time with action on the right. The
action matrix coefficients are \textbf{regular functions} \cite[Section 1.1.2%
]{GW}, and the action itself will be called regular. We shall be interested
in the $\mathrm{G}$-orbits on $\Lambda $ and their Zariski closures.

The main example is that in which $\Lambda $ is the space of structure
vectors for nonassociative $\mathbb{F}$-algebras defined on $V$ \cite{IP1}.
For such an algebra, the \textbf{structure vector} of $\mathfrak{a}$
relative to a given basis $\mathbf{v}_{1},\ldots ,\mathbf{v}_{n}$ of $V$ is
the vector $\mathbf{\lambda }=\mathbf{\lambda }(\mathfrak{a})$ whose
components $\lambda _{ijk}$ present the product $\left[ \,,\,\right] $ in $%
\mathfrak{a}$:%
\begin{equation*}
\lbrack \mathbf{v}_{i},\mathbf{v}_{j}]=\sum_{k=1}^{n}\lambda _{ijk}\mathbf{v}%
_{k}.
\end{equation*}%
$\mathrm{G}$ acts on the set of algebras: the product for the image algebra $%
\mathfrak{a}^{\prime }=\mathfrak{a}\mathbf{g}$ under $\mathbf{g}\in \mathrm{G%
}$ is given by $[\mathbf{u},\mathbf{v}]^{\prime }=\mathbf{g}^{-1}[\mathbf{gu}%
,\mathbf{gv}]$. This presents $\mathbf{g}$ as an isomorphism from $\mathfrak{%
a}^{\prime }$ to $\mathfrak{a}$. The corresponding action in $\Lambda $ is
defined by $\mathbf{\lambda }(\mathfrak{a})\mathbf{g}=\mathbf{\lambda }(%
\mathfrak{a}^{\prime })$. Writing this out yields the \textbf{%
change-of-basis }formula: if $\mathbf{\lambda }^{\prime }=\mathbf{\lambda }(%
\mathfrak{a}^{\prime })$, then%
\begin{equation*}
\lambda _{ijk}^{\prime }=\sum_{a,b,c}g_{ai}g_{bj}\widehat{g}_{kc}\lambda
_{abc},
\end{equation*}%
the $\widehat{g}_{kc}$ being matrix entries of $\mathbf{g}^{-1}$. As those
are $1/\det \mathbf{g}$ times entries in the classical adjoint of $\mathbf{g}
$, the coefficients of the change of basis action are indeed members of $%
\mathbb{F}[g_{11},g_{12},\ldots ,g_{nn},1/d]$. The name \textquotedblleft
change-of-basis\textquotedblright\ arises from the fact that $\mathbf{%
\lambda }^{\prime }$ is also the structure vector for $\mathfrak{a}$
relative to the basis $\mathbf{gv}_{1},\ldots ,\mathbf{gv}_{n}$.

In this context, the members of the Zariski closure $\overline{\mathbf{%
\lambda }\mathrm{G}}$ of the $\mathrm{G}$-orbit $\mathbf{\lambda }\mathrm{G}$
are called \textbf{degenerations} of $\mathbf{\lambda }$. There is a large
literature on algebra degenerations, most of which focusses on particular
types (Lie and Jordan algebras, for example). Some of this is cited in \cite%
{IP1}. In most other papers, $\mathbb{F}$ is taken to be the real or complex
field.

\section{Contractions}

For a general $\Lambda $, we shall call any member of $\overline{\mathbf{%
\lambda }\mathrm{G}}$ a degeneration of $\mathbf{\lambda }$. If $\mathbf{%
\lambda }^{\prime }\in \overline{\mathbf{\lambda }\mathrm{G}}$, the standard
notation is $\mathbf{\lambda }\longrightarrow \mathbf{\lambda }^{\prime }$,
and $\mathfrak{a}\longrightarrow \mathfrak{a}^{\prime }$ if $\mathbf{\lambda 
}(\mathfrak{a})=\mathbf{\lambda }$ and $\mathbf{\lambda }^{\prime }(%
\mathfrak{a})=\mathbf{\lambda }^{\prime }$ for algebras $\mathfrak{a}$ and $%
\mathfrak{a}^{\prime }$. One way to obtain degenerations is by the process
of \textbf{contraction }(a term sometimes used interchangeably with
\textquotedblleft degeneration\textquotedblright ). Let $\mathcal{C}=\{%
\mathbf{g}^{t}|t\in \mathbb{F}\}$ be a one-parameter family of members of $%
\mathrm{G}$--a \textbf{contraction family}. The matrix entries of $\mathbf{g}%
^{t}$ are to be in the field $\mathbb{F}(t)$ of rational functions in $t$
over $\mathbb{F}$. We may write a function $f(t)$ with $t$ as an exponent: $%
f^{t}$. Define the \textbf{order} $\mathrm{ord}f$ of $f(t)$ to be $\infty $
if $f=0$, and for $f(t)=t^{m}a(t)/b(t)$ with $a(0)$ and $b(0)$ both nonzero,
put $\mathrm{ord}f=m$. If $\mathrm{ord}f=0$, $f$ will be called a unit; it
is indeed a unit of the subalgebra of $\mathbb{F}(t)$ consisting of the
functions with nonnegative order (an algebra containing $\mathbb{F}[t]$).
The order presents an exponential discrete valuation of $\mathbb{F}(t)$
whose completion is the field $\mathbb{F}((t))$ of formal Laurent series
over $\mathbb{F}$. Some of the theorems in \cite{L} that will be used are
stated in terms of this completion. If $\mathrm{ord}f\geq 0$, then $t=0$ can
safely be substituted in $f$. We usually write $f^{0}$ instead of $f(0)$,
and say that $f^{0}$ \textbf{exists}, picturing $f^{0}$ as a kind of limit.

For a matrix $M$ of any size with entries in $\mathbb{F}(t)$, let $\mathrm{%
ord}M$ be the minimum of the orders of the entries in $M$. Then $M$ can be
evaluated at $t=0$ just when $\mathrm{ord}M\geq 0$. The rationality demand
on the entries in $\mathbf{g}^{t}$ is independent of the basis of $V$
chosen, and the entries in the companion action on $\Lambda $ will also be
in $\mathbb{F}(t)$. When $\mathcal{C}$ is understood and $\mathbf{\lambda }%
\in \Lambda $, let $\mathbf{\lambda }^{t}=\mathbf{\lambda g}^{t}$. If $%
\mathbf{\lambda }$ is a structure vector for an algebra $\mathfrak{a}$, and $%
\mathbf{\lambda }^{0}$ exists, we shall also write $\mathfrak{a}^{0}$ for
the algebra corresponding to $\mathbf{\lambda }^{0}$.

\begin{proposition}
\label{PropConInCl}Suppose that for some $\mathbf{\lambda }\in \Lambda $, $%
\mathrm{ord}(\mathbf{\lambda }^{t})\geq 0$, so that $\mathbf{\lambda }^{0}$
exists. Then $\mathbf{\lambda }^{0}\in \overline{\mathbf{\lambda }\mathrm{G}}%
.$
\end{proposition}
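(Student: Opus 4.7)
The plan is to exploit the standard characterization of the Zariski closure $\overline{\mathbf{\lambda}\mathrm{G}}$ as the common vanishing set of all polynomial functions on $\Lambda$ that vanish on the orbit $\mathbf{\lambda}\mathrm{G}$. Accordingly, I would fix an arbitrary polynomial $P$ on $\Lambda$ with $P(\mathbf{\lambda}\mathbf{g}) = 0$ for every $\mathbf{g} \in \mathrm{G}$ and aim to show $P(\mathbf{\lambda}^{0}) = 0$; since $P$ is arbitrary, this places $\mathbf{\lambda}^{0}$ in the closure.

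The first step is to view $P(\mathbf{\lambda}^{t}) = P(\mathbf{\lambda}\mathbf{g}^{t})$ as an element of $\mathbb{F}(t)$. Because the companion action has coefficients in $\mathbb{F}[g_{11},\ldots,g_{nn},1/d]$ and the entries of $\mathbf{g}^{t}$ lie in $\mathbb{F}(t)$, each component of $\mathbf{\lambda}^{t}$ is a rational function in $t$; composing with the polynomial $P$ produces $P(\mathbf{\lambda}^{t}) \in \mathbb{F}(t)$. For every $t_{0} \in \mathbb{F}$ at which all denominators are nonzero (this excludes only finitely many $t_{0}$), the matrix $\mathbf{g}^{t_{0}}$ is a member of $\mathrm{G}$, so by the vanishing hypothesis $P(\mathbf{\lambda}^{t_{0}}) = 0$. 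Since $\mathbb{F}$ is infinite, the rational function $P(\mathbf{\lambda}^{t})$ has infinitely many zeros in $\mathbb{F}$ and hence is identically zero in $\mathbb{F}(t)$.

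The second step is to specialize at $t = 0$. The assumption $\mathrm{ord}(\mathbf{\lambda}^{t}) \geq 0$ says precisely that every component of $\mathbf{\lambda}^{t}$ can safely be evaluated at $t = 0$, producing $\mathbf{\lambda}^{0}$. Because $P$ is a polynomial, the operation of substituting $t = 0$ commutes with applying $P$, so
\begin{equation*}
P(\mathbf{\lambda}^{0}) = \bigl[P(\mathbf{\lambda}^{t})\bigr]_{t=0} = 0.
\end{equation*}
As $P$ was arbitrary among polynomials vanishing on $\mathbf{\lambda}\mathrm{G}$, we conclude $\mathbf{\lambda}^{0} \in \overline{\mathbf{\lambda}\mathrm{G}}$.

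The only real subtlety is the passage from pointwise vanishing on $\mathbb{F}$ to identical vanishing in $\mathbb{F}(t)$, which rests on two uses of $\mathbb{F}$ being infinite: ruling out the finitely many $t_{0}$ where $\mathbf{g}^{t_{0}}$ fails to be defined or invertible, and then invoking the fact that a nonzero element of $\mathbb{F}(t)$ has only finitely many zeros in $\mathbb{F}$. No delicate completion or valuation-theoretic machinery is needed for this proposition; the hypothesis $\mathrm{ord}(\mathbf{\lambda}^{t}) \geq 0$ is used only to guarantee that the naive substitution $t \mapsto 0$ makes sense in $\Lambda$.
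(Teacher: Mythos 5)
Your proof is correct and takes essentially the same route as the paper's: evaluate an arbitrary polynomial $P$ vanishing on the orbit along the family $\mathbf{g}^{t}$, note that the resulting element of $\mathbb{F}(t)$ vanishes at all but finitely many $t_{0}\in\mathbb{F}$ and hence is identically zero, then use $\mathrm{ord}(\mathbf{\lambda}^{t})\geq 0$ to specialize at $t=0$. The only cosmetic difference is that the paper phrases the last step in terms of the order of $P(\mathbf{\lambda}^{t})$ rather than direct substitution, and you are slightly more careful in spelling out why only finitely many $t_{0}$ are excluded.
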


\begin{proof}
Let $P$ be a polynomial function in the coordinates on $\Lambda $ relative
to a basis for which the entries of $\mathbf{\lambda }^{t}\ $have
nonnegative orders. Then $\mathrm{ord}(P(\mathbf{\lambda }^{t}))\geq 0$.
Thus $P(\mathbf{\lambda }^{t})$ is either identically $0$ or it is $%
t^{m}u^{t}$, $m\geq 0$, for some unit $u^{t}$. If $P$ is $0$ on the orbit $%
\mathbf{\lambda }$\textrm{$G$}, then $P(\mathbf{\lambda }^{t})$ is $0$ for
all values of $t$ for which $\det \Lambda (\mathbf{g}^{t})$ is not $0$, an
infinite set. So either $P(\mathbf{\lambda }^{t})$ is identically $0$ or $%
m>0 $. In either event, $P(\mathbf{\lambda }^{0})=0$. This being true for
any such $P$, we conclude that $\mathbf{\lambda }^{0}\in \overline{\mathbf{%
\lambda }\mathrm{G}}$.
\end{proof}

The argument here is used for most of the degeneration results in the
literature, and some examples occur in \cite{IP1}. We refer to the limit\ $%
\mathbf{\lambda }^{0}$ as a \textbf{contraction} of $\mathbf{\lambda }$ and
say that $\mathbf{\lambda }$ is amenable to contraction (by the family $%
\mathcal{C}$). The term \textquotedblleft contraction\textquotedblright\ has
been widely used to suggest a limiting process for algebras. The two surveys 
\cite{B} and \cite{N} give some insight into motivation, and \cite{IP1}
contains historical background along with other exposition.

A favorite example has $\mathbf{g}^{t}=t\mathrm{I}$, $\mathrm{I}$ the
identity matrix; $\mathbf{g}^{-1}=t^{-1}\mathrm{I}$. Then in the
change-of-basis action,%
\begin{eqnarray*}
\lambda _{ijk}^{t} &=&\sum_{a,b,c}(t\delta _{ai})(t\delta
_{bj})(t^{-1}\delta _{kc})\lambda _{abc} \\
&=&t\lambda _{ijk}.
\end{eqnarray*}%
Thus $\lambda _{ijk}^{0}=0$, and $\mathbf{\lambda }^{0}$ is the structure
vector of the algebra for which all the products are $0$. So this
\textquotedblleft Abelian algebra\textquotedblright\ is a degeneration of
every algebra.

\section{Quadratic Forms}

In this section we consider the space $\mathfrak{Q}$ of quadratic forms on $%
V $ and the action of \textrm{$G$} on it. By and large we use the notation
from \cite[Chapter 1]{L}. As there, we take $\mathrm{char}\mathbb{F}\neq 2$.
Let $Q$ be a quadratic form on $V$ with $B_{Q}$ the corresponding symmetric
bilinear form: $Q(\mathbf{u})=B_{Q}(\mathbf{u},\mathbf{u})$ and $B_{Q}(%
\mathbf{u},\mathbf{v})=\frac{1}{4}(Q(\mathbf{u}+\mathbf{v})-Q(\mathbf{u}-%
\mathbf{v}))$. (We'll generally drop the $Q$ on $B_{Q}$.) The matrix $\left[
B\right] $ of $B$ relative to a basis $\mathbf{v}_{1},\ldots ,\mathbf{v}_{n}$
of $V$ has entries $B(\mathbf{v}_{i},\mathbf{v}_{j})$. The right action of 
\textrm{$G$} on $\mathfrak{Q}$ is given by $(Q\mathbf{g})(\mathbf{u})=Q(%
\mathbf{gu})$ and $B\mathbf{g}(\mathbf{u},\mathbf{v})=B(\mathbf{gu},\mathbf{%
gv})$. For the matrices, $\left[ B\mathbf{g}\right] =\mathbf{g}^{\mathrm{T}}%
\left[ B\right] \mathbf{g}$, $\mathbf{g}^{\mathrm{T}}$ the transpose of $%
\mathbf{g}$. The forms $Q$ and $Q\mathbf{g}$ are called \textbf{equivalent},
written $Q\cong Q\mathbf{g}$. To show that $V$ is endowed with a quadratic
form $Q$, one writes $(V,Q)$ or $(V,B_{Q})$ and speaks of $(V,Q)$ as a 
\textbf{quadratic space}. The \textbf{rank} $\mathrm{rank}Q$ of $Q$ is the
rank of $\left[ B_{Q}\right] $, and $Q$ is called \textbf{nonsingular}
(\textquotedblleft regular\textquotedblright\ in \cite{L}) if this rank is $%
\dim V$. A $0$-form is one of rank $0$, possibly with $\dim V=0$.

If $(V_{1},Q_{1}),\ldots ,(V_{m},Q_{m})$ are quadratic spaces, their \textbf{%
orthogonal sum} is the direct sum of the $V_{i}$ endowed with the quadratic
form $Q$ given by $Q(\mathbf{u}_{1},\ldots ,\mathbf{u}_{m})=Q_{1}(\mathbf{u}%
_{1})+\ldots +Q_{m}(\mathbf{u}_{m})$, where $\mathbf{u}_{i}\in V_{i}$. One
writes $V=V_{1}\perp \ldots \perp V_{m}$, and $Q=Q_{1}\perp \ldots Q_{m}$.
When all the $V_{i}$ have dimension $1$ and $V_{i}=\mathbb{F}\mathbf{v}_{i}$%
, with $Q_{i}(\mathbf{v}_{i})=x_{i}$, $V$ is displayed as $V=\left\langle
x_{1},\ldots ,x_{m}\right\rangle $. Relative to the basis $\mathbf{v}%
_{1},\ldots ,\mathbf{v}_{m}$, $\left[ B_{Q}\right] $ is diagonal, and $Q$
itself is also said to be diagonal. The \textbf{scaled} form $\left\langle
xx_{1},\ldots ,xx_{m}\right\rangle $ is denoted $\left\langle x\right\rangle
\left\langle x_{1},\ldots ,x_{m}\right\rangle $ in \cite{L}, employing a
Kronecker product. However, we can safely use $x\left\langle x_{1},\ldots
,x_{m}\right\rangle $. A $0$-form will be $\left\langle 0,0,\ldots
,0\right\rangle $ when $\dim V>0$.

Every quadratic form is equivalent to a diagonal form \cite[I.2.4]{L}. A
major result is \textbf{Witt's Cancellation Theorem} \cite[I.4.2]{L}: if $%
Q\perp Q_{1}\cong Q\perp Q_{2}$, then $Q_{1}\cong Q_{2}$. Each quadratic
form $Q$ is equivalent to a form $Q_{N}\perp Q_{Z}$ in which $Q_{N}$ is
nonsingular and $Q_{Z}$ is a $0$-form (corresponding to the \textbf{radical}
of $Q$ \cite[p. 5]{L}). In two such decompositions, the nonsingular parts
are equivalent by the cancellation theorem. One says that $Q$ \textbf{%
represents }$Q^{\prime }$ if $Q$ is equivalent to $Q_{N}^{\prime }\perp
Q^{\prime \prime }$ for some form $Q^{\prime \prime }$ \cite[Section 42]{OM}%
. Proper basis choice gives $\left[ B_{Q}\right] =%
\begin{bmatrix}
B_{Q_{N}^{\prime }} & 0 \\ 
0 & B_{Q^{\prime \prime }}%
\end{bmatrix}%
$. Representation depends only on the equivalence classes of $Q$ and $%
Q^{\prime }$.

There is a further refinement of nonsingular quadratic forms: for a form $Q$%
, an \textbf{isotropic} member $\mathbf{v}$ of $V$ has $\mathbf{v}\neq 0$
but $Q(\mathbf{v})=0$ \cite[Sectin 1.4]{L}. The \textbf{hyperbolic plane} is 
$\left\langle 1,-1\right\rangle $, and an orthogonal sum of hyperbolic
planes is a \textbf{hyperbolic space}. \textbf{Anisotropc} spaces contain no
isotropic vector. The nonsingular form $Q_{N}$ above is equivalent to $%
Q_{A}\perp Q_{H}$, with $Q_{A}$ anisotropic and $Q_{H}$ hyperbolic; these
components of $Q$ are also unique to equivalence.

\subsection{Degenerations of quadratic forms}

What about degenerations of quadratic forms relative to the action of 
\textrm{$G$} on $\mathfrak{Q}$? With a basis chosen for $V$, coordinates for 
$\mathfrak{Q}$ can be taken to be those matrix entries $b_{ij}$ of the
matrices $[B_{Q}]$ for which $i\leq j$, because $\left[ B_{Q}\right] $ is
symmetric. If $P$ is a polynomial in the $b_{ij}$, let $P(Q)$ mean $P$
evaluated at the matrix entries of $[B_{Q}].$ The induced \textrm{$G$}%
-action is regular all right. Let $\mathfrak{Q}_{r}$ be the set of quadratic
forms having rank at most $r$; $\mathfrak{Q}_{r}$ is \textrm{$G$}-invariant.
As in \cite[Remark 3.15]{IP1}, $\mathfrak{Q}_{r}$ is Zariski closed. Here is
the degeneration result:

\begin{theorem}
\label{ThmOrbClQF}If $Q$ is a quadratic form of rank $r$, the orbit closure $%
\overline{Q\mathrm{G}}$ is $\mathfrak{Q}_{r}$.
\end{theorem}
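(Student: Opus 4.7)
The plan is to prove the two inclusions of $\overline{Q\mathrm{G}} = \mathfrak{Q}_r$ separately. The easy direction $\overline{Q\mathrm{G}} \subseteq \mathfrak{Q}_r$ just combines two facts already on the table: $\mathfrak{Q}_r$ is Zariski closed, and the rank is invariant under the $\mathrm{G}$-action (since $[B\mathbf{g}] = \mathbf{g}^{\mathrm{T}}[B]\mathbf{g}$ with $\mathbf{g}$ invertible), so $Q\mathrm{G} \subseteq \mathfrak{Q}_r$ and the closure is no bigger.

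For the reverse inclusion, I would resist invoking Proposition \ref{PropConInCl} alone: over $\mathbb{R}$ with $Q = \langle 1\rangle$ the form $\langle -1\rangle$ sits in $\mathfrak{Q}_1$, yet no contraction family can reach it, since a limit of $(g^{t})^{2}$ must be a square in $\mathbb{F}$. Instead I would extend the orbit map to all of $M_n$: define the polynomial map $\phi\colon M_n \to \mathfrak{Q}$ by $\phi(\mathbf{g}) = \mathbf{g}^{\mathrm{T}}[B_Q]\mathbf{g}$. Because $\mathbb{F}$ is infinite, $\mathrm{GL}_n(\mathbb{F})$ (the open complement of $\det = 0$) is Zariski dense in $M_n(\mathbb{F})$, so every polynomial $P$ in the $b_{ij}$ that vanishes on $Q\mathrm{G} = \phi(\mathrm{GL}_n(\mathbb{F}))$ forces $P \circ \phi \equiv 0$ as a polynomial in the entries $g_{ij}$. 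This identity then holds over any extension, hence $P$ vanishes on all of $\phi(M_n(\overline{\mathbb{F}}))$.

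The decisive remaining step is to verify $\phi(M_n(\overline{\mathbb{F}})) = \mathfrak{Q}_r(\overline{\mathbb{F}})$. Over an algebraically closed field every form diagonalises to $\langle 1,\ldots,1,0,\ldots,0\rangle$ of its rank, so given such diagonalisations of $[B_Q]$ and of an arbitrary target $M$ of rank $s \leq r$, a direct Gram-matrix construction produces a (possibly singular) $\mathbf{g}$ with $\mathbf{g}^{\mathrm{T}}[B_Q]\mathbf{g} = M$, essentially by zeroing out $r-s$ of the $r$ available diagonal slots. Granting this, $P$ vanishes on $\mathfrak{Q}_r(\overline{\mathbb{F}})$ and in particular on $\mathfrak{Q}_r(\mathbb{F})$, finishing the argument. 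The main obstacle is conceptual rather than computational: contraction is genuinely weaker than Zariski closure when $\mathbb{F}$ is not algebraically closed, so one must abandon the one-parameter-family viewpoint and work with the defining ideal, borrowing $\overline{\mathbb{F}}$ to secure the surjectivity of $\phi$.
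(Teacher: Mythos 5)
Your argument is correct, and it reaches the key inclusion $\overline{Q\mathrm{G}}\supseteq \mathfrak{Q}_{r}$ by a genuinely different route from the paper. The paper stays entirely over $\mathbb{F}$: it diagonalizes $Q$ to $\left\langle x_{1},\ldots ,x_{r},0,\ldots ,0\right\rangle $, notes that replacing $x_{1}$ by $z^{2}x_{1}$ ($z\neq 0$) stays inside the orbit, so any $P$ vanishing on $Q\mathrm{G}$ restricts to a one-variable polynomial with infinitely many zeros in the first diagonal slot and hence vanishes for every value there; iterating slot by slot sweeps out all diagonal forms of rank at most $r$, which suffices because $\overline{Q\mathrm{G}}$ is $\mathrm{G}$-invariant. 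You instead extend the orbit map $\phi :\mathbf{g}\mapsto \mathbf{g}^{\mathrm{T}}\left[ B_{Q}\right] \mathbf{g}$ to the space of all $n\times n$ matrices, use Zariski density of the invertible matrices (valid since $\mathbb{F}$ is infinite, via multiplying by $\det $) to conclude that $P\circ \phi $ is the zero polynomial, and then pass to $\overline{\mathbb{F}}$, where the image of the extended map is exactly the rank-at-most-$r$ forms; the Gram-matrix step you leave implicit does work (take $\mathbf{g}=\mathbf{a}^{-1}E\mathbf{b}$, where $\mathbf{a},\mathbf{b}$ diagonalize source and target and $E$ is the diagonal idempotent with $s$ ones). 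The passage to $\overline{\mathbb{F}}$ is genuinely needed in your setup, since over $\mathbb{F}$ itself the image of $\phi $ on all matrices can be a proper subset of $\mathfrak{Q}_{r}(\mathbb{F})$ --- your own $\left\langle 1\right\rangle $ versus $\left\langle -1\right\rangle $ example over $\mathbb{R}$ shows this --- whereas the paper's scaling trick never leaves the ground field. What your approach buys is a cleaner conceptual mechanism (the defining ideal of the orbit closure is controlled by the polynomial identity $P\circ \phi \equiv 0$); what the paper's buys is elementarity and a template it reuses verbatim for the structure vectors of the two- and three-dimensional algebras later on. Your side remark that Proposition \ref{PropConInCl} alone cannot yield the theorem is also correct and consistent with Theorem \ref{ThmQFConRep}: over $\mathbb{R}$, $\left\langle 1\right\rangle $ does not represent $\left\langle -1\right\rangle $, so $\left\langle -1\right\rangle $ is a degeneration but not a contraction of $\left\langle 1\right\rangle $.
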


\begin{proof}
This being true when $r=0$, let $r>0$. Relative to the chosen basis of $V$,
let $\left\langle x_{1},\ldots ,x_{r},0,\ldots ,0\right\rangle $ be a
diagonal form with nonzero $x_{i}$ that is equivalent to $Q$ and so in $Q%
\mathrm{G}$. If $P$ is a polynomial vanishing on $Q\mathrm{G}$, let $p(x)$
be the function $x\longrightarrow P(\left\langle x,x_{2},\ldots
,x_{r},0,\ldots ,0\right\rangle )$. Then $p(x)$ is a polynomial in $x$. For $%
z\neq 0$, the form $\left\langle z^{2}x_{1},x_{2},\ldots ,x_{m},0,\ldots
,0\right\rangle $ is equivalent to $Q$, by the diagonal map $%
v_{1}\longrightarrow zv_{1},\,v_{i}\longrightarrow v_{i}$ for $i\geq 2$.
That form then being in $Q\mathrm{G}$, $p(z^{2}x_{1})=0$. So $p(x)$ has
infinitely many $0$'s, making $p(x)$ identically $0$. That is, $%
P(\left\langle x,x_{2},\ldots ,x_{r},0,\ldots ,0\right\rangle )=0$ for all $%
x $. As this holds for any $P$ vanishing on $Q\mathrm{G}$, it must be that
all the forms $\left\langle x,x_{2},\ldots ,x_{r},0,\ldots ,0\right\rangle $
are in $\overline{Q\mathrm{G}}$.

For arbitrary $x_{1}^{\prime }$, we can apply this argument to $\left\langle
x_{1}^{\prime },x,x_{3},\ldots ,x_{r},0,\ldots ,0\right\rangle $, which is
in $\overline{Q\mathrm{G}}$ for $x=x_{2}$, and conclude that $\left\langle
x_{1}^{\prime },x_{2}^{\prime },x_{3}\ldots ,x_{r},0,\ldots ,0\right\rangle
\in \overline{Q\mathrm{G}}$ for all $x_{1}^{\prime },x_{2}^{\prime }$. And
so on: the result is that $\left\langle x_{1}^{\prime },x_{2}^{\prime
},\ldots ,x_{r}^{\prime },0,\ldots ,0\right\rangle \in \overline{Q\mathrm{G}}
$ for all $x_{1}^{\prime },\ldots ,x_{r}^{\prime }$ Thus $\overline{Q\mathrm{%
G}}\supseteq \mathfrak{Q}_{r}$. As $Q\in \mathfrak{Q}_{r}$ and $\mathfrak{Q}%
_{r}$ is closed, it must be that $\overline{Q\mathrm{G}}=\mathfrak{Q}_{r}$.
\end{proof}

This result shows that quadratic forms cannot really be separated by
degeneration: $Q$ degenerates to $Q^{\prime }$ just when $\mathrm{\mathrm{%
rank}}Q^{\prime }\leq \mathrm{rank}Q$.

\subsection{Contractions of quadratic forms}

Let $\mathcal{C}=\left\{ \mathbf{g}^{t}|t\in \mathbb{F}\right\} $ be a
contraction family and consider its action on $\mathfrak{Q}$. By the formula 
$\left[ B_{Q\mathbf{g}}\right] =\mathbf{g}^{\mathrm{T}}\left[ B_{Q}\right] 
\mathbf{g}$, the action is regular. Put $Q^{t}=Q\mathbf{g}^{t}$ and $%
B^{t}=B_{Q^{t}}$. The form $Q$ is amenable to contraction just when $\mathrm{%
ord}\left[ B^{t}\right] \geq 0$, giving limits $B^{0}$ and $Q^{0}$.

\begin{theorem}
\label{ThmQFConRep}Let $Q$ and $Q^{\prime }$ be quadratic forms on $V$. Then 
$Q^{\prime }$ is a contraction of $Q$ if and only if $Q$ represents $%
Q^{\prime }$.
\end{theorem}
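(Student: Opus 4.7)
I would tackle the two implications separately, with the reverse direction---``$Q'$ a contraction of $Q$ implies $Q$ represents $Q'$''---carrying the real content.

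For the easy direction, assume $Q\cong Q'_{N}\perp Q''$. The plan is to build a contraction family explicitly. Pick $\mathbf{h}\in\mathrm{G}$ so that $\mathbf{h}^{\mathrm{T}}[B_{Q}]\mathbf{h}$ is block-diagonal with blocks $[B_{Q'_{N}}]$ (of size $r=\mathrm{rank}\,Q'$) and $[B_{Q''}]$, and $\mathbf{k}\in\mathrm{G}$ so that $\mathbf{k}^{\mathrm{T}}[B_{Q'}]\mathbf{k}=[B_{Q'_{N}}]\oplus 0$. With $\mathbf{d}^{t}=I_{r}\oplus tI_{n-r}$ and $\mathbf{g}^{t}=\mathbf{h}\,\mathbf{d}^{t}\,\mathbf{k}^{-1}$, a direct matrix computation gives $[B_{Q^{t}}]=(\mathbf{k}^{-1})^{\mathrm{T}}\bigl([B_{Q'_{N}}]\oplus t^{2}[B_{Q''}]\bigr)\mathbf{k}^{-1}$, which reduces modulo $t$ to $[B_{Q'}]$; hence $Q^{0}=Q'$.

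For the harder direction, I would pass to the completion and work in the DVR $R=\mathbb{F}[[t]]$ with fraction field $K=\mathbb{F}((t))$. The contraction hypothesis yields a symmetric matrix $B:=[B_{Q^{t}}]\in M_{n}(R)$, $K$-equivalent to $[B_{Q}]$ via $\mathbf{g}^{t}$ and congruent to $[B_{Q'}]$ modulo $t$. After an $\mathbb{F}$-change of basis I may take $[B_{Q'}]=B'_{N}\oplus 0$ with $B'_{N}$ invertible of size $r\times r$, so $B$ admits a block decomposition $B=\bigl(\begin{smallmatrix}B_{11}&B_{12}\\B_{12}^{\mathrm{T}}&B_{22}\end{smallmatrix}\bigr)$ with $B_{11}$ invertible over $R$ and lifting $B'_{N}$, while $B_{12},B_{22}$ have entries in $tR$. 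A symmetric block Gauss reduction produces $P\in\mathrm{GL}_{n}(R)$ with $P^{\mathrm{T}}BP=B_{11}\oplus B_{22}'$ and $B_{22}'$ still in $tM_{n-r}(R)$. Diagonalizing $B_{11}$ over $R$ and using $1+tR\subseteq(R^{\times})^{2}$ (valid since $\mathrm{char}\,\mathbb{F}\neq 2$) gives $B_{11}\cong_{R}B'_{N}$, whence $Q\otimes K\cong Q'_{N}\otimes K\perp F$, where $F$ is the $K$-form with matrix $B_{22}'$.

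It remains to descend this $K$-equivalence to an $\mathbb{F}$-equivalence. Writing $F\cong F_{N}\perp(\text{$0$-form})$, a rank comparison over $K$ gives $\dim F_{N}=r_{Q}-r$ and $0$-part of dimension $n-r_{Q}$; cancelling the $0$-form pieces yields $Q_{N}\otimes K\cong Q'_{N}\otimes K\perp F_{N}$. Here I would invoke Springer's theorem for nondegenerate forms over the complete DVR $R$ \cite[Ch.~6]{L}: write the Springer decomposition $F_{N}\cong F_{1}\otimes K\perp t\cdot(F_{2}\otimes K)$ with nondegenerate $\mathbb{F}$-forms $F_{1},F_{2}$, and note that $Q_{N}\otimes K$ has residue pair $(Q_{N},0)$. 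Matching residues forces $F_{2}=0$ and $Q_{N}\cong_{\mathbb{F}}Q'_{N}\perp F_{1}$. Therefore $Q\cong Q_{N}\perp\langle 0\rangle^{n-r_{Q}}\cong Q'_{N}\perp\bigl(F_{1}\perp\langle 0\rangle^{n-r_{Q}}\bigr)$, so $Q$ represents $Q'$.

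The obstacle is exactly this descent step. The contraction data only provide a $K$-equivalence, and without Springer there is no automatic reason a $K$-splitting of a nondegenerate $\mathbb{F}$-form should come from an $\mathbb{F}$-splitting; the completeness of $\mathbb{F}((t))$, together with $\mathrm{char}\,\mathbb{F}\neq 2$, is exactly what makes the residue comparison work, and it also supplies the unit-square identity used in showing $B_{11}\cong_{R}B'_{N}$.
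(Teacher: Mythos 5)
Your proof is correct and follows essentially the same route as the paper's: an explicit block-scaling family for the forward direction, and, for the converse, a reduction of the deformed Gram matrix over the valuation ring into a unimodular block (identified with $B_{Q'_{N}}$ via $1+tR\subseteq (R^{\times})^{2}$) plus a $t$-divisible block, followed by Springer's residue comparison over $\mathbb{F}((t))$ --- exactly the decomposition into unit and $t$-scaled diagonal pieces that the paper carries out by hand. The one small overstatement is that matching residues only gives $[F_{2}]=0$ in $W(\mathbb{F})$ (so $F_{2}$ hyperbolic, not necessarily zero) and $Q_{N}\cong Q'_{N}\perp F_{1}\perp(\text{hyperbolic})$ after the dimension count, but the conclusion that $Q$ represents $Q'$ follows just the same.
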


\begin{proof}
Suppose that $Q$ represents $Q^{\prime }$. As remarked above, proper choice
of basis gives $\left[ B_{Q}\right] =%
\begin{bmatrix}
B_{Q_{N}^{\prime }} & 0 \\ 
0 & B_{Q^{\prime \prime }}%
\end{bmatrix}%
$. Using the same basis and block sizes, let $\mathbf{g}^{t}=%
\begin{bmatrix}
\mathrm{I} & 0 \\ 
0 & t\mathrm{I}%
\end{bmatrix}%
$. Then $(\mathbf{g}^{t})^{\mathrm{T}}\left[ B_{Q}\right] \mathbf{g}^{t}=%
\begin{bmatrix}
B_{Q_{N}^{\prime }} & 0 \\ 
0 & t^{2}B_{Q^{\prime \prime }}%
\end{bmatrix}%
$, with limit $%
\begin{bmatrix}
B_{Q_{N}^{\prime }} & 0 \\ 
0 & 0%
\end{bmatrix}%
$. This is a matrix for $B_{Q^{\prime }}$, showing that $Q^{\prime }$ is a
contraction of $Q$.

For the converse, let $Q^{\prime }$ be a contraction of $Q$ by the family $%
\mathcal{C}$: $Q^{\prime }\cong Q^{0}$. With $B=B_{Q}$, we have $\left[ B^{t}%
\right] =(\mathbf{g}^{t})^{\mathrm{T}}\left[ B\right] \mathbf{g}^{t}$, and $%
\mathrm{ord}\left[ B^{t}\right] \geq 0$. We can assume that $[B]$ is
diagonal: if $\left[ \mathbf{s}\right] ^{\mathrm{T}}[B]\left[ \mathbf{s}%
\right] $ is diagonal for some $\mathbf{s}\in \mathrm{G}$, replace $\mathbf{g%
}^{t}$ by $\mathbf{s}^{-1}\mathbf{g}^{t}$. We can also assume that $\left[
B^{t}\right] $ is diagonal. To see that, follow the diagonalization process
outlined in \cite[Section 42G]{O}. Each step replaces a matrix $M$ with $E^{%
\mathrm{T}}ME$, this time with the elementary matrix $E$ having entries in $%
\mathbb{F}(t)$. But avoid taking $E$ to be a scaling matrix. Let $m$ be an
entry in $M$ of least (nonnegative) order. If $m$ is on the diagonal, we can
permute to bring it to the $(1,1)$ position. If $m$ is off-diagonal and the
diagonal entries all have orders strictly larger than $\mathrm{\mathrm{ord}}m
$, we can use a computation illustrated by the following $2\times 2$ example
to change the $(1,1)$ entry to an element of order $\mathrm{ord}m$:%
\begin{equation*}
\begin{bmatrix}
1 & 1 \\ 
0 & 1%
\end{bmatrix}%
\begin{bmatrix}
a & m \\ 
m & b%
\end{bmatrix}%
\begin{bmatrix}
1 & 0 \\ 
1 & 1%
\end{bmatrix}%
=%
\begin{bmatrix}
2m+a+b & b+m \\ 
b+m & b%
\end{bmatrix}%
.
\end{equation*}%
Since $a$ and $b$ have orders strictly larger than $\mathrm{ord}m$, $\mathrm{%
\mathrm{ord}}(2m+a+b)=\mathrm{ord}m$. Call the new $(1,1)$ entry $m$ again,
now with least order. Then the entries in the new matrix in row 1 and column
1, other than that at $(1,1)$, can be sent to $0$, using elementary matrices
with off-diagonal entry $-c/m$, where $c$ is the entry to be made $0$. Here $%
\mathrm{ord}(-c/m)\geq 0$. The process can be repeated on the submatrix in
the rows and columns with indices larger than 1. If the product of the
elementary matrices used is $\left[ \mathbf{e}\right] $, $\mathbf{e}$ can
then be incorporated into $\mathbf{g}^{t}$. Since $B$ and $B^{t}$ have the
same rank, we can also permute and remove all-0 rows and columns to assume
that $B$ and $B^{t}$ are nonsingular. Call the new rank $n$ again. The final
form for $\left[ B^{t}\right] $ is a block diagonal matrix whose blocks are
matrices of diagonal forms $D_{0}^{t},\,tD_{1}^{t},\,\ldots ,t^{s}D_{s}^{t}$%
, where each nonzero $\left[ D_{i}^{t}\right] $ is a diagonal matrix with
unit entries. Then $\left[ B^{0}\right] $ shows just the one block $\left[
D_{0}^{0}\right] $, the rest being $0$-blocks. Let $S_{i}^{t}$ be the
quadratic form corresponding to $D_{i}^{t}$. Finally, let $\mathcal{E}$ be
the set of even $i$ for which $D_{i}^{t}$ is nonzero, and $\mathcal{O}$ the
set of odd such $i$.

Now each nonsingular form $R$ over $\mathbb{F}(t)$ is $\mathbb{F}(t)$%
-equivalent to $R_{1}\perp tR_{2}$ where $R_{1}\cong \left\langle
u_{1},\ldots ,u_{r}\right\rangle $ and $R_{2}\cong \left\langle w_{1},\ldots
,w_{s}\right\rangle $, with the $u_{i}$ and $w_{j}$ units (one of the two
forms might not appear). This comes from scaling basis vectors by powers of $%
t$. The two forms $R_{1}$ and $R_{2}$ are not necessarily unique, but if $%
R\cong R_{1}^{\prime }\perp tR_{2}^{\prime }$ is another such decomposition,
then with superscript $0$'s indicating evaluation of the $u_{i}$ and $w_{i}$
at $t=0$, either $(R_{1}^{\prime })^{0}\cong R_{1}^{0}\perp H$ or else $%
(R_{1})^{0}\cong (R_{1}^{\prime })^{0}\perp H$, the $H$'s being hyperbolic
forms over $\mathbb{F}$. The same holds for $R_{2}$ and $R_{2}^{\prime }$
(see \cite[V.1]{L}, which presents a theorem of T. A. Springer involving the
Witt ring and the completion $\mathbb{F}((t))$).

For $Q$ in Theorem \ref{ThmQFConRep}, just $Q_{1}$ appears, with $Q_{1}\cong
Q$, since $Q$ is nonsingular with coefficients in $\mathbb{F}$. For $Q^{t}$,
we can take $Q_{1}^{t}$ to be the orthogonal sum $S_{\mathcal{E}}^{t}$ of
the $S_{i}^{t}$ with $i\in \mathcal{E}$ and $Q_{2}^{t}$ the sum with $i\in 
\mathcal{O}$, because of scaling basis vectors by powers of $t$. In the
hyperbolic statement, it must be that $Q\cong Q_{1}^{0}\perp H$ for some
hyperbolic form $H$ over $\mathbb{F}$, because $n\geq \mathrm{rank}Q_{1}^{0}$
and $n=\mathrm{rank}Q$. We have $Q_{1}^{0}=S_{\mathcal{E}}^{0}$, giving $%
Q\cong S_{\mathcal{E}}^{0}\perp H$. Now $Q^{0}=S_{0}^{0}\perp Z_{0}$ for
some $0$-form $Z_{0}$ of rank $n-\mathrm{rank}S_{0}^{0}$. If $Q^{0}$ itself
is a $0$-form, it is certainly represented by $Q$. If not, then $S_{0}^{0}$
is not a $0$-form, $Q_{N}^{0}\cong S_{0}^{0}$, and $Q\cong Q_{N}^{0}\perp R$
for some $R$, because $S_{0}^{0}$ is one of the orthogonal summands of $S_{%
\mathcal{E}}^{0}$. This all says that the limit $Q^{0}$ is represented by $Q$%
.
\end{proof}

\section{Trace forms}

Let $\mathfrak{a}$ be an algebra defined on the vector space $V$, with
structure vector $\mathbf{\lambda }$ relative to a basis $\mathbf{v}%
_{1},\ldots ,\mathbf{v}_{n}$ of $V$. For $\mathbf{u}\in V$, let $\mathrm{ad}%
_{\mathfrak{a}}(\mathbf{u})$ be the \textbf{adjoint} map $\mathbf{v}%
\longrightarrow \lbrack \mathbf{u},\mathbf{v}]$. Then the \textbf{trace form 
}of $\mathfrak{a}$ is the symmetric bilinear form defined by $T_{\mathfrak{a}%
}(\mathbf{u},\mathbf{v})=\mathrm{tr}(\mathrm{ad}_{\mathfrak{a}}(\mathbf{u})%
\mathrm{ad}_{\mathfrak{a}}(\mathbf{v}))$, with corresponding quadratic form $%
Q_{\mathfrak{a}}$. For $\left[ T_{\mathfrak{a}}\right] =\left[ t_{ij}\right] 
$, $t_{ij}=\sum_{k,l}\lambda _{ikl}\lambda _{jlk}$. The rank of $\left[ T_{%
\mathfrak{a}}\right] $ is termed the \textbf{rank} of $\mathfrak{a}$. This
rank will be at most $r$ exactly when all the determinants of $m\times m$
submatrices of $\left[ T_{\mathfrak{a}}\right] $ for $m>r$ are $0$. That
condition can be expressed in terms of polynomials in the $\lambda _{ijk}$,
so that the set of algebras of rank at most $r$ is Zariski closed (see, for
instance, \cite[Section 3]{IP1}). The book \cite{CP} presents a detailed
study of trace forms.

If $\mathcal{C=}\{\mathbf{g}^{t}|t\in \mathbb{F}\}$ is a contraction family
for $\mathbf{\lambda }$ and $\mathrm{ord}\mathbf{\lambda }^{t}\geq 0$, so
that $\mathbf{\lambda }^{0}$ exists, then $\mathrm{ord}T_{\mathfrak{a}}\geq
0 $ also, and $T_{\mathfrak{a}}^{0}$ exists too. Since $t_{ij}^{0}=%
\sum_{k,l}\lambda _{ikl}^{0}\lambda _{jlk}^{0}$, $T_{\mathfrak{a}}^{0}=T_{%
\mathfrak{a}^{0}}$ and $Q_{\mathfrak{a}}^{0}=Q_{\mathfrak{a}^{0}}$. By
Theorem \ref{ThmQFConRep}, $Q_{\mathfrak{a}}$ must represent $Q_{\mathfrak{a}%
^{0}}$.

\subsection{Two-dimensional algebras}

As an example, consider 2-dimensional commutative associative
algebras--fields in particular. Since $\mathrm{char}\mathbb{F}\neq 2$, every
quadratic field over $\mathbb{F}$ has the form $\mathbb{F}(w)$ with $w^{2}=s$
for some nonsquare $s\in \mathbb{F}$. The structure vector $\mathbf{\lambda }%
_{s}$ tailored to this by having $\mathbf{v}_{1}$ as the identity and $%
\mathbf{v}_{2}$ as $w$ is%
\begin{equation}
\lambda _{111}=1,\,\lambda _{122}=1,\,\lambda _{212}=1,\,\lambda _{221}=s,
\label{EqLamQuadField}
\end{equation}%
and all other $\lambda _{ijk}=0$. Then in the trace form $T_{s}$,%
\begin{equation*}
t_{ij}=\lambda _{i11}\lambda _{j11}+\lambda _{i12}\lambda _{j21}+\lambda
_{i21}\lambda _{j12}+\lambda _{i22}\lambda _{j22},
\end{equation*}%
and $\left[ T_{s}\right] =%
\begin{bmatrix}
2 & 0 \\ 
0 & 2s%
\end{bmatrix}%
$. For another nonsquare $s^{\prime }$, $Q_{s^{\prime }}\cong Q_{s}$ just
when $\left\langle 2s\right\rangle \cong \left\langle 2s^{\prime
}\right\rangle $, by Witt cancellation, and that in turn amounts to $%
s=s^{\prime }r^{2}$ for some $r\in \mathbb{F}$. And \emph{that} is the same
as $\mathbb{F}(w)\cong \mathbb{F}(w^{\prime })$, where $w^{\prime
2}=s^{^{\prime }}$. Now the only way for $Q_{s}$ to represent $Q_{s^{\prime
}}$ is by $Q_{s}\cong Q_{s^{\prime }}$. The upshot is that one quadratic
field can be a contraction of another only when the two fields are
isomorphic.

How about degeneracy? For $s\neq 0$, let $\mathfrak{f}_{s}$ be the
two-dimensional algebra with $\mathbf{\lambda }(\mathfrak{f}_{s})=\mathbf{%
\lambda }_{s}$ (\ref{EqLamQuadField}). As for fields, $\mathfrak{f}_{s}\cong 
\mathfrak{f}_{r^{2}s}$ when $r\neq 0$. Imitating the proof of Theorem \ref%
{ThmOrbClQF}, consider $\overline{\mathbf{\lambda }_{s}\mathrm{G}}$ and let $%
P$ be a polynomial vanishing on $\mathbf{\lambda }_{s}\mathrm{G}$. Define $%
p(x)$ by $x\longrightarrow P(\mathbf{\lambda }_{x})$. Then $p(r^{2}s)=0$ for
all $r\neq 0$, giving $p(x)$ an infinite number of $0$'s and making $p(x)$
identically $0$. As this applies to all such $P$, $\mathbf{\lambda }_{x}\in $
$\overline{\mathbf{\lambda }_{s}\mathrm{G}}$ for all $x$ (including $0$).
The fields and the direct sum $\mathbb{F}\oplus \mathbb{F}$ are the
semisimple 2-dimensional commutative algebras over $\mathbb{F}$. Moreover, $%
\mathbb{F}\oplus \mathbb{F}\cong \mathfrak{f}_{1}$: $\frac{1}{2}(\mathbf{v}%
_{2}+\mathbf{v}_{1})$ and $\frac{1}{2}(\mathbf{v}_{2}-\mathbf{v}_{1})$ are
orthogonal idempotents. So all these algebras degenerate to one another.

The approach in \cite[Section 3]{K} shows three other isomorphism types of
2-dimensional associative commutative algebras, with the structure vectors
displayed below (the algebra indexing follows \cite{IP2}). The last three
columns give the dimensions of the annihilator, the square, and the
derivation space of the algebras.%
\begin{equation*}
\begin{tabular}{cccccccccccc}
& $\lambda _{111}$ & $\lambda _{112}$ & $\lambda _{121}$ & $\lambda _{122}$
& $\lambda _{211}$ & $\lambda _{212}$ & $\lambda _{221}$ & $\lambda _{222}$
& ann & sq & der \\ 
$\mathfrak{a}_{0}$ & $0$ & $0$ & $0$ & $0$ & $0$ & $0$ & $0$ & $0$ & $2$ & $%
0 $ & $4$ \\ 
$\mathfrak{a}_{4}$ & $1$ & $0$ & $0$ & $1$ & $0$ & $1$ & $0$ & $0$ & $0$ & $%
2 $ & $1$ \\ 
$\mathfrak{a}_{5}$ & $1$ & $0$ & $0$ & $0$ & $0$ & $0$ & $0$ & $0$ & $1$ & $%
1 $ & $1$%
\end{tabular}%
\end{equation*}
So $\mathfrak{f}_{s}\longrightarrow \mathfrak{f}_{s^{\prime }}$, and the
further degenerations are%
\begin{equation*}
\mathfrak{f}_{s}\longrightarrow \mathfrak{a}_{4},\,\mathfrak{f}%
_{s}\longrightarrow \mathfrak{a}_{5},\,\mathfrak{f}_{s}\longrightarrow 
\mathfrak{a}_{0},\,\mathfrak{a}_{4}\longrightarrow \mathfrak{a}_{0},\,%
\mathfrak{a}_{5}\longrightarrow \mathfrak{a}_{0}
\end{equation*}%
as in \cite{IP2}. The ones of the form $\mathfrak{f}_{s}\longrightarrow 
\mathfrak{a}$ combine $\mathfrak{f}_{s}\longrightarrow \mathfrak{f}_{1}$
followed by a contraction from $\mathfrak{f}_{1}$ to $\mathfrak{a}$. A
degeneration $\mathfrak{a}_{5}\longrightarrow \mathfrak{a}_{4}$ is ruled out
by \cite[Lemma 3.17]{IP1}, based on the dimensions. If $\mathbb{F}=\mathbb{C}
$, the derivation dimensions rule out both $\mathfrak{a}_{4}\longrightarrow 
\mathfrak{a}_{5}$ and $\mathfrak{a}_{5}\longrightarrow \mathfrak{a}_{4}$;
see \cite[Section II]{BB} for the algebraic geometric background. For
general infinite $\mathbb{F}$ with $\mathrm{char}\mathbb{F}\neq 2$, one can
find polynomials (using Maple$^{\text{TM}}$) vanishing on one orbit but not
the other:%
\begin{equation*}
\begin{tabular}{cl}
$\lambda _{111}^{2}-\lambda _{212}^{2}+2\lambda _{112}\lambda
_{211}+2\lambda _{112}\lambda _{222}$ & $0$ on $\mathbf{\lambda }(\mathfrak{a%
}_{4})\mathrm{G}$ but not on $\mathbf{\lambda }(\mathfrak{a}_{5})\mathrm{G}$
\\ 
$\lambda _{111}\lambda _{212}-\lambda _{112}\lambda _{211}$ & $0$ on $%
\mathbf{\lambda }(\mathfrak{a}_{5})\mathrm{G}$ but not on $\mathbf{\lambda }(%
\mathfrak{a}_{4})\mathrm{G}$%
\end{tabular}%
\end{equation*}%
Thus neither $\mathfrak{a}_{4}\longrightarrow \mathfrak{a}_{5}$ nor $%
\mathfrak{a}_{5}\longrightarrow \mathfrak{a}_{4}$ holds.

\subsection{Three-dimensional fields}

Let $\mathbb{F}$ be an infinite field whose characteristic is neither $2$
nor $3$. Let $\mathfrak{f}$ be a cubic extension field of $\mathbb{F}$,
considered as a commutative associative algebra over $\mathbb{F}$; the
product $\left[ \mathbf{u},\mathbf{v}\right] $ will be written in the
conventional form $\mathbf{uv}$. Algebra $\mathfrak{f}$ has an identity $%
\mathbf{e}$ and a generating element $\mathbf{w}$ for which $\mathbf{e},\,%
\mathbf{w}$, and $\mathbf{w}^{2}$ form a basis of $V$. By a standard
normalization we may assume that $\mathbf{w}^{3}=p\mathbf{w}+q\mathbf{e}$
for certain $p,\,q\in \mathbb{F}$, with $q\neq 0$. We can further normalize
such an algebra when $p\neq 0$ by taking as generator $\mathbf{u}=\frac{1}{p}%
\mathbf{w}^{2}-\frac{1}{3}\mathbf{e}$ . Then $\mathbf{u}^{3}=\mathbf{u}%
^{2}-(4p^{3}-27q^{2})/27p^{3}$, and $\mathbf{e},\,\mathbf{u}$, and $\mathbf{u%
}^{2}$ are independent. But if $p=0$, we take $b\neq 0$ and $\mathbf{u}%
=1/(9bq)\mathbf{w}^{2}+b\mathbf{w}+1/3$, for which $\mathbf{u}^{3}=\mathbf{u}%
^{2}+(27b^{3}q-1)^{2}/(729b^{3}q)$. In order that $\mathbf{e},\,\mathbf{u}$,
and $\mathbf{u}^{2}$ be linearly independent, it turns out that we need $%
729b^{6}q^{2}\neq 1$. Since $\mathbb{F}$ is infinite, there is a $b$ for
which both the last inequality holds and the constant term in $\mathbf{u}%
^{3} $ is nonzero. Thus we may assume that $\mathfrak{f}$ has the defining
relation $\mathbf{w}^{3}=\mathbf{w}^{2}+c$ for some $c\neq 0$. Denote such
an algebra, field or not, by $\mathfrak{f}_{c}$, with the understanding that
the algebra has an identity $\mathbf{w}^{0}$ and that $\left\{ \mathbf{w}%
^{0},\mathbf{w},\mathbf{w}^{2}\right\} $ is a basis of $V$. (Write $a\mathbf{%
w}^{0}$ just as $a$.) Let $\mathfrak{F}$ be the set of the algebras defined
on $V$ that are isomorphic to any of these field-like algebras on $V$,
including $\mathfrak{f}_{0}$, the one with defining relation $\mathbf{w}^{3}=%
\mathbf{w}^{2}$ (still with basis $\left\{ \mathbf{w}^{0},\mathbf{w},\mathbf{%
w}^{2}\right\} $). We are interested in degenerations $\mathfrak{f}%
\longrightarrow \mathfrak{f}^{\prime }$, with $\mathfrak{f},\,\mathfrak{f}%
^{\prime }$ in $\mathfrak{F}$.

The trace form of $\mathfrak{f}_{c}$ relative to the basis $\left\{ \mathbf{w%
}^{0},\mathbf{w},\mathbf{w}^{2}\right\} $ has matrix%
\begin{equation*}
\begin{bmatrix}
3 & 1 & 1 \\ 
1 & 1 & 3c+1 \\ 
1 & 3c+1 & 4c+1%
\end{bmatrix}%
,
\end{equation*}%
with determinant $-c(27c+4)$. Let $\gamma =27c+4$; this combination shows up
repeatedly in what follows.

We search for isomorphic versions of $\mathfrak{f}_{c}$ by trying to find
elements $\mathbf{u}\in \mathfrak{f}_{c}$ for which $\mathbf{u}^{3}=\mathbf{u%
}^{2}+d$ for some $d$, again aided by Maple$^{\text{TM}}$. Let $\mathbf{u}%
=x_{2}\mathbf{w}^{2}+x_{1}\mathbf{w}+x_{0}\mathbf{w}^{0}$. Then look for
triples $\left\{ x_{0},x_{1},x_{2}\right\} $ for which $\mathbf{u}^{3}-%
\mathbf{u}^{2}\in \mathbb{F}$ by equating the $\mathbf{w}^{2}$- and $\mathbf{%
w}$- coefficients of $\mathbf{u}^{3}-\mathbf{u}^{2}$ to $0$. In solving the
equations, an ingredient that is a root of a quadratic equation appears. The
discriminant of the quadratic needs to be a square in $\mathbb{F}$ to
produce solutions in $\mathbb{F}$. This results in a conic with at least one 
$\mathbb{F}$-point. Its other $\mathbb{F}$-points can be found in the
standard way by taking a line with slope $m$ through the given point and
finding the second intersection with the conic. The resulting coefficients
in $\mathbf{u}$ parameterized by $m$ are as follows, with $\Delta
=m^{2}-3\gamma c$:%
\begin{equation*}
x_{0}=\frac{2c(3m-\gamma )}{\Delta },\,\,x_{1}=\frac{m^{2}-(\gamma
-9c)m+3\gamma c}{\Delta },\,\,x_{2}=\frac{4m}{\Delta }.
\end{equation*}%
Moreover, $d=c(m^{3}-\gamma m^{2}+9\gamma cm-\gamma ^{2}c)^{2}/\Delta ^{3}$.
The determinant of $\mathbf{w}^{0},\,\mathbf{u}$, and $\mathbf{u}^{2}$ is a
nonzero rational function of $m$. It follows that $\mathfrak{f}_{d}\in 
\mathfrak{F}$ for an infinite number of values of $d$ for which $\mathfrak{f}%
_{d}\cong \mathfrak{f}_{c}$. As one computes, $27d^{2}+4d=s^{2}(27c^{2}+4c)$
for a rational function $s$ of $m$. This is in line with the remarks on
trace forms above, since those of $\mathfrak{f}_{c}$ and $\mathfrak{f}_{d}$
should be equivalent.

Let $\mathbf{\lambda }_{c}=\mathbf{\lambda }(\mathfrak{f}_{c})$ be the
structure vector of $\mathfrak{f}_{c}$, $c\neq 0$, relative to a specified
basis of $V$. For $\overline{\mathbf{\lambda }_{c}\mathrm{G}}$ we use the
earlier polynomial argument: if $P$ is a polynomial in the coordinates $%
\lambda _{ijk}$ of the structure space which vanishes on $\mathbf{\lambda }%
_{c}\mathrm{G}$, put $p(x)=P(\mathbf{\lambda }_{x})$, $\mathbf{\lambda }_{x}=%
\mathbf{\lambda }(\mathfrak{f}_{x})$. Then $p(x)$ is a polynomial, and by
the preceding discussion, $p(d)=0$ for infinitely many $d$. Thus $P(\mathbf{%
\lambda }_{x})=0$ for all $x$, and $\mathfrak{f}_{c}\longrightarrow 
\mathfrak{f}_{x}$ for any $x$. Thus

\begin{proposition}
\label{PropFieldClosure}For $c\neq 0$, $\mathbf{\lambda }(\mathfrak{F}%
)\subseteq \overline{\mathbf{\lambda }(\mathfrak{f}_{c})\mathrm{G}}$.
\end{proposition}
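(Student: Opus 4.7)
The plan is to combine the Zariski-polynomial trick used in the proof of Theorem \ref{ThmOrbClQF} and in the two-dimensional case with the parametric family of isomorphic copies of $\mathfrak{f}_c$ exhibited just above the statement. Let $P$ be any polynomial in the coordinates $\lambda_{ijk}$ on $\Lambda$ that vanishes on the orbit $\mathbf{\lambda}_c\mathrm{G}$, and set $p(x)=P(\mathbf{\lambda}_x)$, where $\mathbf{\lambda}_x=\mathbf{\lambda}(\mathfrak{f}_x)$. Since the nonzero entries of $\mathbf{\lambda}_x$ are themselves polynomial in $x$, one has $p\in\mathbb{F}[x]$, a genuine single-variable polynomial.

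Next I invoke the preceding construction: for each slope $m\in\mathbb{F}$ outside the finite ``bad'' locus (where $\Delta=m^{2}-3\gamma c$ vanishes or where the determinant of $\{\mathbf{w}^0,\mathbf{u},\mathbf{u}^{2}\}$ vanishes) one obtains $\mathbf{u}\in\mathfrak{f}_c$ with $\mathbf{u}^{3}=\mathbf{u}^{2}+d$, exhibiting an $\mathbb{F}$-algebra isomorphism $\mathfrak{f}_d\cong\mathfrak{f}_c$. Therefore $\mathbf{\lambda}_d\in\mathbf{\lambda}_c\mathrm{G}$ and $p(d)=0$. Because the explicit formula
$d=c(m^{3}-\gamma m^{2}+9\gamma c m-\gamma^{2}c)^{2}/\Delta^{3}$
is a nonconstant rational function of $m$ and $\mathbb{F}$ is infinite, the set of admissible $d$-values is infinite. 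Hence $p(x)$ has infinitely many roots in $\mathbb{F}$ and must vanish identically. In particular $P(\mathbf{\lambda}_x)=0$ for every $x\in\mathbb{F}$, including $x=0$, corresponding to $\mathfrak{f}_0\in\mathfrak{F}$.

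Since $P$ was an arbitrary polynomial vanishing on $\mathbf{\lambda}_c\mathrm{G}$, each $\mathbf{\lambda}_x$ belongs to the Zariski closure $\overline{\mathbf{\lambda}_c\mathrm{G}}$. The closure is $\mathrm{G}$-invariant because each $\mathbf{g}\in\mathrm{G}$ acts on $\Lambda$ as a regular automorphism and therefore permutes orbit closures; thus the full orbit $\mathbf{\lambda}_x\mathrm{G}$ of structure vectors of algebras on $V$ isomorphic to $\mathfrak{f}_x$ also lies in $\overline{\mathbf{\lambda}_c\mathrm{G}}$. Taking the union over $x\in\mathbb{F}$ gives exactly $\mathbf{\lambda}(\mathfrak{F})\subseteq\overline{\mathbf{\lambda}_c\mathrm{G}}$. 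The only substantive obstacle is the one already resolved above the statement: one must verify that $d(m)$ is nonconstant and that the finitely many forbidden values of $m$ leave infinitely many legitimate $d$'s; once that is granted, the standard vanishing-polynomial argument closes the proof.
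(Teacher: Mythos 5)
Your proposal is correct and follows essentially the same route as the paper: the vanishing-polynomial argument applied to $p(x)=P(\mathbf{\lambda}_x)$, with the infinitude of roots supplied by the $m$-parametrized family of isomorphic copies $\mathfrak{f}_d\cong\mathfrak{f}_c$. Your added remarks — that $d(m)$ must be nonconstant and that $\mathrm{G}$-invariance of the closure promotes each point $\mathbf{\lambda}_x$ to its whole orbit — are details the paper leaves implicit, and they are handled correctly.
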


The algebras in $\mathfrak{F}$ include any commutative associative
semisimple algebra over $\mathbb{F}$. For if not a field, such an algebra is
isomorphic either to $\mathbb{F}\oplus \mathbb{F\oplus F}$ or to $\mathbb{F}%
\oplus \mathbb{F}(\sqrt{n})$ for some nonsquare $n\in \mathbb{F}$. Let $%
\mathfrak{c}$ be the commutative algebra with basis $\mathbf{v}_{1},\,%
\mathbf{v}_{2},\,\mathbf{v}_{3}$ and relations%
\begin{equation*}
\mathbf{v}_{1}^{2}=\mathbf{v}_{1},\,\mathbf{v}_{2}^{2}=\mathbf{v}_{2},\,%
\mathbf{v}_{3}^{2}=s\mathbf{v}_{2},\,\mathbf{v}_{1}\mathbf{v}_{2}=0,\mathbf{v%
}_{1}\mathbf{v}_{3}=0,\,\mathbf{v}_{2}\mathbf{v}_{3}=\mathbf{v}_{3},
\end{equation*}%
$s\neq 0$, so that $\mathfrak{c}\cong \mathbb{F}\oplus \mathbb{F}(\sqrt{s})$%
, the second summand being $\mathbb{F}\mathbf{v}_{2}+\mathbb{F}\mathbf{v}%
_{3} $. The identity $\mathbf{e}$ of $\mathfrak{c}$ is $\mathbf{v}_{1}+%
\mathbf{v}_{2}$. If $s$ is a nonzero square, then $\mathbb{F}(\sqrt{s})\cong 
\mathbb{F\oplus F}$, via the orthogonal idempotents $\frac{1}{2}(\mathbf{v}%
_{2}\pm \frac{1}{\sqrt{s}}\mathbf{v}_{3})$. To show that $\mathfrak{c}\cong 
\mathfrak{f}_{d}$ for some $d$, we follow the computation strategy we have
been using. The upshot is that for a line slope parameter $m$, we can take $%
\mathbf{w}=x_{1}\mathbf{v}_{1}+x_{2}\mathbf{v}_{2}+x_{3}\mathbf{v}_{3}$, with%
\begin{equation*}
x_{1}=\frac{9m^{2}s-1}{9m^{2}s+3},\,x_{2}=\frac{2}{9m^{2}s+3},\,x_{3}=\frac{%
2m}{3m^{2}s+1},
\end{equation*}%
$m$ chosen to make $\mathbf{e},\,\mathbf{w}$, and $\mathbf{w}^{2}$
independent. The total restriction on $m$ is that $m^{2}s\notin \left\{
-1/3,0,1/9,1\right\} $. Then $\mathbf{w}^{3}=\mathbf{w}^{2}+d\mathbf{e}$,
with%
\begin{equation*}
d=-\frac{4(9m^{2}s-1)^{2}}{27(3m^{2}s+1)^{3}}.
\end{equation*}


\begin{thebibliography}{IP1}
\bibitem[BB]{BB} T. Bene\v{s} and D. Burde, Degenerations of pre-Lie
algebras, \emph{J. Math. Phys.} 50 (2009), no. 11, 9 pp.

\bibitem[B]{B} D. Burde, Contractions of Lie algebras and algebraic groups, 
\emph{Arch. Math. (Brno)} 43 (2007), no. 5, 321--332.

\bibitem[CP]{CP} P. E. Conner and R. Perlis, \emph{A Survey of Trace Forms
of Algebraic Number Fields}, Series in Pure Mathematics, 2. World Scientific
Publishing Co., Singapore (1984).

\bibitem[Ge]{Ge} M. Geck, \emph{An Introduction to Algebraic Geometry and
Algebraic Groups}, Oxford University Press, Oxford (2003).

\bibitem[GW]{GW} R. Goodman and N. R. Wallach, \emph{Representations and
Invariants of the Classical Groups}, Cambridge University Press, Cambridge
(1998).

\bibitem[IP1]{IP1} N. M. Ivanova and C. A. Pallikaros, On degeneration of
algebras over an arbitrary field, \emph{Adv. Group Theory Appl.} 7 (2019),
39--83.

\bibitem[IP2]{IP2} N. M. Ivanova and C. A. Pallikaros, Degenerations of
complex associative algebras of dimension three via Lie and Jordan algebras,
arXiv:2212.10635v1 (2022).

\bibitem[K]{K} Y. Kobayashi, K. Shirayanagi, M. Tsukada, and S-E. Takahasi,
A complete classi cation of three-dimensional algebras over $\mathbb{R}$ and 
$\mathbb{C}$--(visiting old, learn new), \emph{Asian-Eur. J. Math}. 14
(2021), no. 8, Paper No. 2150131, 25 pp.

\bibitem[L]{L} T. Y. Lam, \emph{Introduction to Quadratic Forms over Fields}%
, Graduate Studies in Mathematics, 67; Amer. Math. Soc., Providence (2005).

\bibitem[N]{N} M. Nesterenko and R. Popovych, Contractions of
low-dimensional Lie algebras, \emph{J. Math. Phys.} 47 (2006), no. 12, 45 pp.

\bibitem[O]{O} O. T. O'Meara, \emph{Introduction to Quadratic Forms},
Academic Press, New York (1963).
\end{thebibliography}
\end{document}